\documentclass[11pt,reqno]{article}
\usepackage{graphicx}
\usepackage{latexsym,epsfig,mathrsfs,amsmath,amssymb}
\usepackage{amstext}
\usepackage{hyperref}
\usepackage{verbatim}
\usepackage{algorithm}
\usepackage{algpseudocode}
\usepackage{color}
\newtheorem{theorem}{Theorem}

\numberwithin{example}{section}
\numberwithin{equation}{section}
\numberwithin{theorem}{section}

\usepackage{url}
\usepackage{listings}
\usepackage{xcolor}
\lstdefinestyle{mystyle}{%
	backgroundcolor=\color{gray!12}, commentstyle=\color{codegreen},
	keywordstyle=\color{magenta},
	basicstyle=\ttfamily\footnotesize,
	breakatwhitespace=false,
	breaklines=true,
	captionpos=b,
	keepspaces=true,
	numbers=left,
	numbersep=5pt,
	showspaces=false,
	showstringspaces=false,
	showtabs=false,
	tabsize=2  
}
\allowdisplaybreaks

\begin{document}

\begin{flushleft}
  {\bf\large On Average Modulus of Random Polynomials Over a Unit Circle and Disc}
\end{flushleft}
\parindent=0mm \vspace{.3in}

{\bf{Sajad A. Sheikh$^{a,\star}$ \quad and \quad Mohammad Ibrahim Mir$^{a}$}}

\parindent=0mm \vspace{.1in}
{\small \it
$^{a}$Department of  Mathematics, University of Kashmir, South Campus, Anantnag 192101, Jammu and Kashmir, India.
E-mail: $\text{sajadsheikh@uok.edu.in};\text{ibrahimmath80@gmail.com}$}

\parindent=0mm \vspace{.1in}
{\small {\bf Abstract.}
This article presents some interesting and novel results concerning the average modulus of random polynomials on the unit circle and the unit disc, with coefficients distributed as standard normal variates. The paper also introduces new results concerning the bounds of the maximum modulus of random polynomials with coefficients distributed as independently as Gaussian and uniform variates, utilizing probability principles to derive findings about the likelihood of the maximum modulus exceeding a specific threshold, using Markov inequality as the primary probabilistic tool. These findings and the approach can potentially initiate the study of a rich class of problems concerning the norms of random polynomials.

\parindent=0mm \vspace{.1in}
{\small {\bf Keywords:} Maximum modulus; Polynomial behaviour; Expected value; Trigonometric polynomial.}

\parindent=0mm \vspace{.1in}
{\bf Mathematics Subject Classification.} 05D40; 30C10; 30C15; 60E05.

\parindent=0mm \vspace{.2in}
\section{Introduction and Motivation}

The study of polynomials and their properties has always been a central theme in algebra and complex analysis given their pure interest and pervasive applications. Polynomials, being one of the most basic yet effective mathematical entities, find widespread applications across a variety of disciplines, including physics, engineering, computer science, and economics \cite{rah,borp,mil}.They, along with their random counterparts called random polynomials, serve as mathematical tools in diverse problems, such as the design and analysis of algorithms, the modeling of physical phenomena, the development of computational methods for solving differential equations, and the understanding of the behavior of complex systems \cite{borp,Bharuch}. Investigating their behavior and  the methods to quantify it constitutes a significant research endeavor in mathematical sciences. This paper delves into this rich and fertile domain, focusing on the probabilistic study of the modulus of random polynomials over a unit circle, specifically those with coefficients distributed independently and identically as a standard normal variate.

 \parindent=0mm \vspace{.1in}
 
 In our previous works,the methodologies laid out in \cite{sheikh2023} and \cite{sheikh2024} introduce a novel probabilistic approach for the study of the distribution of zeros and critical points. In \cite{sheikh2023}, we embarked on an exploration of the Eneström–Kakeya theorem within a probabilistic framework, revealing insights into the stability of random polynomials without the constraints of coefficient ordering, thereby broadening the theorem's applicability across various probability distributions of coefficients. Building on this, \cite{sheikh2024} delved into the intricate relationship between the distribution of zeros and critical points of random polynomials with some distributions on coefficients or zeros , employing probabilistic inequalities to establish bounds on their moduli. These investigations set the stage for further exploration into the dynamic interplay of polynomial coefficients and their roots in a probabilistic setting. The current study extends these methodologies to a more focused investigation of the norms of random polynomials on the unit circle, aiming to obtain some new probabilistic results and insights into their behavior and distribution.
In the literature, various norms of complex polynomials have been defined and studied. One of the most extensively studied norms is the maximum modulus over the unit circle, also called the infinity norm, which plays a crucial role in many classical inequalities about polynomials like Bernstein inequality and Markov inequality. The maximum norm over a unit circle serves as a powerful lens through which the behavior of the polynomial can be scrutinized. This relationship is also deeply intertwined with the study of mean values of a polynomial and the theory of growth estimates. The maximum modulus of a polynomial on the unit circle,in particular, is a critical measure of the polynomial's growth  \cite{rah}. Following \cite{rah},  this relationship can be formally be  described as follows:\\
 For a function $F$ continuous on $|z|=1$, if we define the quantities \\
 \begin{equation}
     \mathcal M (F;r)=\left( \frac{1}{2 \pi} \int_{-\pi}^{\pi} |F(r e^{\iota t}|^p dt\right)^{1/p}
 \end{equation} 
 and 
 \begin{equation}
    \lim_{p \to \infty} \mathcal M (F;r)= \max_{\substack{ -\pi  \leq t \leq \pi}} |F(r e^{\iota t})|=\mathcal{M}_{\infty }(F;r), 
 \end{equation}
then the quantity $$ \frac{ \mathcal M(F;1)}{\mathcal{M}_{\infty }(F;r)}$$ is  a measure of the growth of $F$ on the unit circle. Equally significantly, we have the following result \cite{rah}:\\
 If $P$ is  a polynomial of degree at most $n$, then
\begin{equation}
M_{\infty}(P ; 1)=\max_{\substack{|z|=1}}|P(z)| \leq \sqrt{n+1}\left(\frac{1}{2 \pi} \int_{-\pi}^\pi \mid P(r e^{i t}) \mid ^2
dt \right)^{1 / 2}.
\end{equation}
The famous classical Bernstein Inequality \cite{borp} relates the maximum norm of a polynomial to that of its derivative in the form of the following inequality :
\parindent=00mm
Let  $P(z)$  be a complex polynomial of degree $n$  .  Then,
\begin{equation}    
\max_{|z|=1} |P'(z)| \leq n \max_{|z|=1} |P(z)|.
\end{equation}
Regarding the mean of the squared modulus of a polynomial $ P(z)=\sum_0^n c_i z^k $ over the unit circle,it can be easily shown that the average of $ |P(z)|^2$  over $|z|=1$ is given by   $\sum_0^n {c_i}^2.$ It is interesting to note that this well-known fact can also be established by  Viewing $|P(z)|$ as a function of a uniform random variate over $[0,2 \pi].$ Hence  for $z=e^{\iota t},$ we can write :
\begin{equation}
\mathbb{E}(|P(z|^2) =\sum_0^n {c_i}^2
\end{equation}  
 Since for a  random variable $\mathbb E(X) \leq \sqrt{\mathbb E(X^2)}$, it immediately follows:
\begin{equation}
\mathbb{E}(|P(z|) \leq  \sqrt{\sum_0^n {c_i}^2},
\end{equation}
where $|z|=1$
\subsection{Motivation} Polynomials and their random counterparts, random polynomials crop up across a plethora of scientific fields. In signal processing and control systems, complex polynomials often represent transfer functions or system responses \cite{oppen99, kuo92}. The modulus of a polynomial (i.e., the magnitude of its coefficients) provides insights into the system's stability and frequency response. For example, in designing a low-pass filter, understanding the modulus of the transfer function helps ensure that the system attenuates high-frequency noise while preserving the desired signal 
Researchers use polynomial-based models to estimate volatility, hedge portfolios, and simulate market scenarios \cite{rebon, glasser}. Complex polynomials appear in statistical physics models \cite{land80}. Analyzing their zeros informs phase transitions and critical phenomena. Despite the extensive study of polynomial norms in deterministic settings \cite {mil,deewan,neder,frap,govil,Aziz} , the probabilistic study of norms of random polynomials has received comparatively less attention. This gap is significant given that random polynomials model real-world phenomena where randomness is inherent, such as in signal analysis ,statistical mechanics, financial modeling, and complex network analysis. Probabilistic methods allow for a more nuanced understanding of the behavior of polynomials under random influences, leading to results that can inform both theory and applications.

In the literature, however, estimates for various systems of functions $\{f_j\}_{j=1}^{n}$ and random variables $\{\xi_j\}_{j=1}^{n}$ have been widely applied in analysis since the 1930s. In 1954, Salem and Zygmund \cite{salzyg} established several estimates for the uniform norm of random trigonometric polynomials. In particular, they showed that
\begin{equation}
    \mathbb{E} \left\| \sum_{k=-n}^{n} r_k(\omega) e^{i k t} \right\|_{\infty} \asymp (n \log n)^{1/2},
\end{equation}
where $r_k(\omega)$ are the Rademacher functions. Here and further, the expression $A_n \asymp B_n$ stands for $cA_n \leq B_n \leq CA_n$ with some constants $c$ and  $C$. 
Notably, Grigoriev \cite{grigoriev2002} established estimates for the integral-uniform norm and the uniform norm of random polynomials. His work demonstrated the utility of integral-uniform norms in analyzing function distributions and provided lower estimates for these norms, highlighting their importance in various function systems. 

Building on their work and methods,Grigoriev \cite{grigoriev2002} established  some estimates for the mathematical expectation of norms of random polynomials of the type
\begin{equation}
    \sum_{j=1}^{n} a_j \xi_j(\omega) f_j(x),
\end{equation}
where $\{\xi_j\}_{j=1}^{n}$ is a set of independent random variables defined on $(\Omega, P)$, and $\{f_j\}_{j=1}^{n}$ is a set of functions on another probability space $(X, \mu)$. 
Another note-worthy work in this regard is Borwein and Lockhart \cite{borwein2001} which studies the expected $L^p$ norms of random polynomials on the boundary of the unit disc. They provided precise estimates for the growth rates of these norms, extending the understanding of the behavior of random trigonometric polynomials.  
These foundational results highlight the potential for further exploration and the need for new methodologies to tackle unresolved questions about the norms of random polynomials. Given the importance of expected values of random quantities in the study of random systems, we undertake to initiate the study of this and related problems. Specifically, we study the expected value of the modulus of a random polynomial on the unit circle and the unit disc with coefficients distributed as standard normal variaties. The choice of the standard normal distribution is motivated by its prevalence in statistical modeling and its ability to capture a wide range of natural phenomena. Additionally, we present a result regarding the bounds of the maximum modulus and utilize the principles of probability to derive results concerning the likelihood of the maximum modulus surpassing a specific threshold. This analysis is conducted by employing the Markov inequality \cite{Ross,Feller} as a probabilistic tool.  Our paper aims to take a different direction the methods and tools from the theory of probability and obtain some new results regarding probabilistic bounds of the moduli of certain polynomials.  Specifically, we focus on random polynomials with coefficients that adhere to a standard normal distribution.

The exploration is poised to reveal a probabilistic viewpoint on norms of polynomials on the unit circle. The insights gleaned from this study can contribute to a deeper understanding of the behavior of random polynomials under these conditions. The norm of a polynomial plays a fundamental role in the study of certain properties like flatness introduced by Littlewood in the context of some special random polynomials. Furthermore, our methods can be readily adapted to investigate similar questions about random polynomials with different coefficient distributions.

\parindent=0mm \vspace{.1in}

The rest of the paper is organized as follows: Section 2 covers some preliminaries and lemmas, including the general modulus formula of a polynomial, an integral related to advanced calculus, and the probability density function (pdf) of the sum of two folded normal variates. The paper then transitions to the "Main Results" in Section 3, which includes several key theorems. Theorem 3.1 establishes properties of random polynomials with normally distributed coefficients, providing the expected squared magnitude and an upper bound for the expected magnitude on the unit circle. Theorem 3.2 presents a probabilistic bound for the modulus of these polynomials surpassing a specific threshold on the unit circle. Theorem 3.3 extends the analysis to the unit disc, offering results on the average modulus over the unit disc, including a theorem and proof regarding the expected value of the modulus. Further results in this section include probabilistic bounds on the maximum modulus of random polynomials with uniformly distributed coefficients and bounds on the maximum modulus for polynomials with standard normal coefficients. The concluding sections discuss the implications of these findings and potential directions for future research.

---

\section{Preliminaries and Lemmas}

 \subsection{Lemma 1: The General Modulus Formula of a Polynomial}\label{sec:general_modulus_formula}
   For  a polynomial $ P(z)=\displaystyle\sum_0^n a_k z_k $ with  real  coefficients, the square of the  modulus over the unit circle $ |z| =1$ at $ z=e^{\iota t},0\leq t \leq 2 \pi$, is given by: 
\begin{equation}\label{2.1}
|P(z)|^2 =\sum_{j,k=0}^{n} a_j {a_k}  \cos(j-k)t.
\end{equation}
{\bf Proof:}
\parindent=0mm \vspace{.1in}
Consider a polynomial $ P(z)=\displaystyle\sum_0^n a_k z_k $ with  complex coefficients, over the unit circle $ |z| =1$, we have
\begin{equation}\label{2.2}
 z= e^{ \iota t }, 0  \leq t  \leq 2 \pi. \end{equation}
Therefore, 
\begin{align}
    |P(z)|^2 &=Re\left(P(z).\overline{P(z)} \right) \\     
    &=Re\left(\sum_0^n a_j z^j .  \sum_0^n \bar{a_k} \bar{z}^k \right)\notag\\
    &= Re\left(\sum_{j,k=0}^{n}a_j \bar{a_k} z^j \bar{z}^k\right).
\end{align}
 For $z$ on the unit circle, we have upon  using  equation \eqref{2.1} that
  \begin{equation}\label{2.4}
   |P(z)|^2 = \operatorname{Re}\left( \sum_{j,k=0}^{n} e^{-\iota (j-k)t} \right).
  \end{equation}
When the coefficients are real, then the above equation \eqref{2.4} reduces to the following equation 
 \begin{equation}\label{2.5}
 |P(z)|^2 =\sum_{j,k=0}^{n} a_j {a_k}  \cos(j-k)t.
 \end{equation}
 
\subsection{Lemma 2} The following integral can be found in any standard book on advanced calculus or advanced engineering  mathematics text books. For more details and derivations  we refer the reader  to \cite{jain-iyengar}:
\begin{equation}
\displaystyle \int_{-\infty}^{\infty} x^m e^{-ax^2}dx=\dfrac{\sqrt\pi}{a^{m+1/2}}\dfrac{m!}{2^m}\binom{m}{2}!
\end{equation}
}
\subsection{Lemma 3}  The pdf of the sum $X$ of two folded normal variates is given by \cite{Ross,Feller}: 
\begin{equation}
   \displaystyle \dfrac{2 e^{-\frac{x^2}{4}} erf \left(\dfrac{x}{2}\right)}{\sqrt{\pi}}.
\end{equation}

\section{Main Results }
\subsection{Expected value of the Modulus on the unit circle}
In the following theorem, we find the average of the squared modulus of the ensemble of random polynomials of degree $n$ with iid and normally distributed coefficients.

\parindent=0mm \vspace{.1in}
\begin{theorem}
Let $\mathcal{P}_n$ denotes the ensemble of all polynomials of degree $n$ with random coefficients, and if $d\mu_P$ and $d\mu_z$ denote the uniform  probability measures on $\mathcal{P}_n$ and the unit circle.Further ,let  $P(z)$ random polynomial  of the form
\begin{equation} 
P(z) = \sum_{j=0}^{n} A_jz^j,
\end{equation}
where each coefficients $A_j's$ are random variable independently and identically distributed ($iid$) standard normal distribution, i.e., $A_j \sim N(0,1)$ for all $j = 0, 1, 2, \ldots, n$. For this polynomial, when evaluated on the unit circle ($|z|=1$), the following properties hold within the space of all such polynomials $\mathcal{P}_n$:
\begin{itemize}
  \item The expected squared magnitude of $P(z)$ is given by
  \begin{equation}
  \mathbb{E}_{|z|=1, P \in \mathcal{P}_n}[|P(z)|^2] = n+1,
  \end{equation}
  \item The expected magnitude of $P(z)$ is bounded above by
  \begin{equation}
  \mathbb{E}_{|z|=1, P \in \mathcal{P}_n}[|P(z)|] \leq \sqrt{n+1}.
  \end{equation}
\end{itemize}
\end{theorem}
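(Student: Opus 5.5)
We will regard the expectation as taken over the product measure $d\mu_P\times d\mu_z$. Here $z=e^{\iota t}$ with $t$ uniform on $[0,2\pi]$, independent of the coefficient vector $(A_0,\dots,A_n)$. Since $|P(z)|^2\ge 0$ is jointly measurable, Tonelli's theorem lets us compute the expectation as an iterated integral in either order. We will first fix $t$ and average over the coefficients.

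For fixed $t$, Lemma 1 (equation \eqref{2.5}, valid since the $A_j$ are real) gives
\begin{equation*}
|P(e^{\iota t})|^2=\sum_{j,k=0}^{n}A_jA_k\cos\big((j-k)t\big).
\end{equation*}
This is a finite sum, so linearity of expectation applies. The $A_j$ are iid $N(0,1)$, so $\mathbb{E}[A_jA_k]=\delta_{jk}$: the diagonal terms have $\mathbb{E}[A_j^2]=1$ (e.g.\ by Lemma 2 with $m=2$, $a=1/2$, after normalisation), and the off-diagonal terms have $\mathbb{E}[A_j]\mathbb{E}[A_k]=0$ by independence. Only the $n+1$ diagonal terms survive, each with $\cos 0=1$. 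Hence
\begin{equation*}
\mathbb{E}_{P}\big[|P(e^{\iota t})|^2\big]=n+1 \quad\text{for every } t.
\end{equation*}
Averaging this constant over $t$ with respect to $d\mu_z$ gives $n+1$, which proves the first item.

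As a cross-check, we can integrate over $t$ first. For each fixed polynomial, $\frac{1}{2\pi}\int_0^{2\pi}\cos((j-k)t)\,dt=\delta_{jk}$, so the circle average equals $\sum_j A_j^2$. Its expectation is again $n+1$, since $\sum_j A_j^2\sim\chi^2_{n+1}$.

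For the second item, we apply the Cauchy--Schwarz (or Jensen) inequality $\mathbb{E}[X]\le\sqrt{\mathbb{E}[X^2]}$ to $X=|P(z)|$ under the joint measure, exactly as in the introduction's derivation. Combined with the first item, this gives $\mathbb{E}[|P(z)|]\le\sqrt{n+1}$.

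The only step needing care is the first: justifying the exchange of expectation with the double sum and with the $t$-integral. The sum is finite, and Tonelli applies because the integrand is nonnegative, so there is no real obstacle. As a remark, the bound in the second item is not sharp. For fixed $t$, $P(e^{\iota t})$ is a complex Gaussian with $\mathbb{E}|P|^2=n+1$, so $\mathbb{E}|P|$ is a constant factor smaller than $\sqrt{n+1}$. The statement only asks for the upper bound.
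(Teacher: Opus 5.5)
Your proof is correct. It uses the same ingredients as the paper (Lemma 1, $\mathbb{E}A_j^2=1$, independence, and $\mathbb{E}X\le\sqrt{\mathbb{E}X^2}$), but your main line integrates in the opposite order. The paper fixes the coefficients, averages over the circle using orthogonality of $\cos((j-k)t)$ to get $\sum_j A_j^2$, and only then averages over $\mathcal{P}_n$; this is exactly your cross-check. You instead fix $t$ and use orthogonality in the probability space, $\mathbb{E}[A_jA_k]=\delta_{jk}$, and you make explicit via Tonelli the interchange that the paper uses silently.

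The two orders buy different things. The paper's order needs only $\mathbb{E}A_j^2=1$ for each $j$, with no independence or zero mean. It also identifies the circle average itself as the random variable $\sum_j A_j^2$ (here $\chi^2_{n+1}$), not just its mean. However, it relies on $z$ being uniform on the circle. Your order needs the cross terms to vanish: for independent $A_j$ with $\mathbb{E}A_j^2=1$ but common mean $m\neq 0$, $\mathbb{E}_P|P(e^{\iota t})|^2$ would acquire the $t$-dependent term $m^2\sum_{j\neq k}\cos((j-k)t)$. In return, it gives $\mathbb{E}_P|P(z)|^2=n+1$, and hence $\mathbb{E}_P|P(z)|\le\sqrt{n+1}$, for every fixed $|z|=1$. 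So your conclusion holds for any probability measure on the circle chosen independently of the coefficients.

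Two minor points. First, your appeal to Lemma 2 is unnecessary, since $\mathbb{E}A_j^2=\operatorname{Var}(A_j)=1$. Lemma 2 as printed is also misstated: for even $m$ the integral is $\frac{\sqrt{\pi}}{a^{(m+1)/2}}\frac{m!}{2^m(m/2)!}$, and with the printed power $a^{m+1/2}$ one would get $\mathbb{E}A_j^2=2$. Second, your closing sharpness remark is right, with ratio at most $\sqrt{\pi}/2$. However, $P(e^{\iota t})$ is in general a non-circular Gaussian vector in $\mathbb{R}^2$; for example, it is real at $t=0$.
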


    {\bf Proof:} Let $\mathcal{P}_n$ denotes the ensemble of all polynomials of degree $n$ with random coefficients, and if $d\mu_P$ and $d\mu_z$ denote the uniform  probability measures on $\mathcal{P}_n$ and the unit circle, then we have:
    \begin{equation}
    \mathbb {E}_{|w|=1,P \in \mathcal{P}_n }[|P(z)|^2]=\int_{\mathcal{P}_n} \left( \int_{|z|=1} |P(e^{i\theta})|^2 \, d\mu_z \right) \, d\mu_P.
    \end{equation}
     To simplify the above, we observe  that  for any polynomial
\begin{equation*}
             p(z)=\sum_{0}^{n} a_j z^j ,  
           \end{equation*}
the expected value of $|P(z)|^2$ over $|z|=1$ can easily be seen to be  $\displaystyle \sum_{0}^{n} a_j^2.$
By using equation (2.5), we see the inner integral in equation $(3.4)$  average $|P(e^{\iota t})|^2$ over the circle is nothing but the average of the  function  
$\displaystyle\sum_{j,k=0}^{n} a_j {a_k}  \cos(j-k)t ,$ where $t \in (0,2 \pi)$  is viewed as a uniform random variable in the interval  $[0,2 \pi] $. Noting that the pdf of $t$ is $f(t)=\frac{1}{2 \pi} $, we see that the expectation is $\displaystyle\sum_0^n A_j^2.$ 
So the average of all random polynomials of degree $n$ over the unit circle is  given by :
 \begin{equation}
   \mathbb {E}_{|w|=1,P \in \mathcal{P}_n }[|P(z)|^2]= \int_{\mathcal{P}_n}  |P(e^{i\theta})|^2 \,  \, d\mu_P.
    \end{equation}
    Now  ,
\begin{equation*}
 A_j\sim N(0,1) , \, j=0,1,2\cdots,n, 
\end{equation*} ,
therefore $A_j^2 \sim \chi_{1}^{2},$ so that $$\mathbb{E}(A_j^2)=1,j=0,1,2 \cdots,n$$
Alternatively we can calculate it as follows: 
\begin{equation*}
\mathbb E(A_j^2)=\frac{1}{\sqrt2\pi}\int_{-\infty}^{\infty} A_j^2e^{-A_j^2/2}dA_j.
\end{equation*}
Letting $A_j=u,$ we have the expectation 
\begin{equation}
\mathbb E (A_j^2)=\frac{1}{\sqrt2\pi}\int_{-\infty}^{\infty} u^2e^{-u^2/2}dk.
\end{equation}
Using Lemma 2, with $m=2 ,a=1/2,$ above equation simplifies to:  

\begin{equation*}
\int_{-\infty}^{\infty} k^2.e^{-k^2/2}dx=\frac{1}{(1/2)^3/2}.\frac{2!}{4.1}\sqrt{\pi}=(\sqrt2)^3.\sqrt\pi/2=1.
\end{equation*}
Hence, we have for each $j=0,1,2,\cdots,n$
\begin{equation}
  \mathbb E (A_j^2)=1. 
\end{equation}
Thus, in view of the independence of $A_j's$, we see that the average 
 $|P(z)|^2$ for all random polynomial $P(z)$ on $|z|=1$ with coefficients distributed as standard normal variaties  
\begin{align*}
  \mathbb E[|P(z)|^2]_{|z|=1,
  P \in \mathcal{P}_n }&=n+1\\
  \implies \mathbb E(\sum_{0}^{n} A_j^2) &=n+1.
\end{align*}
Now, using the fact that for a random variable $X,$
\begin{equation*}
\mathbb E(X) \leq \sqrt{\mathbb E (X^2)},
\end{equation*}
we immediately see that:
\begin{equation}
   \mathbb E_{|z|=1,P \in \mathcal{P}_n }[|P(z)|] \leq \sqrt{n+1}.
\end{equation}

\parindent=8mm \vspace{.1in}
As an immediate consequence of the above theorem, we have the following theorem:
\begin{theorem}
Let $P(z) = \displaystyle\sum_{i=0}^n A_i z^i$ be a random polynomial where the coefficients $a_i$ are independent and follow the standard normal distribution $N(0,1)$. Then for any $c > 0$, on the unit circle, the probability that the modulus of $P(z)$ on $|z|=1$ for a uniformly chosen $z$ is larger than $c\sqrt{n+1}$ is less than or equal to $\dfrac{1}{c^2}$.
\end{theorem}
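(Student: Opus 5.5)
We will derive this directly from the first part of the preceding theorem by applying Markov's inequality to the nonnegative random variable $X=|P(z)|^2$. Here the randomness is joint: both the coefficients $A_0,\dots,A_n$ and the point $z=e^{\iota t}$ are random, with $t$ uniform on $[0,2\pi]$ and independent of the coefficients. The probability is taken with respect to the product measure $d\mu_P\times d\mu_z$ used in the previous theorem.

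First, we record that $X\ge 0$. By the previous theorem,
\begin{equation*}
\mathbb{E}_{|z|=1,P\in\mathcal{P}_n}[X]=n+1 .
\end{equation*}
For completeness we recall why. Conditioning on $t$ and using the formula of Lemma 1, $|P(e^{\iota t})|^2=\sum_{j,k}A_jA_k\cos(j-k)t$. Independence gives $\mathbb{E}[A_jA_k]=\delta_{jk}$, so the conditional expectation equals $n+1$ for every fixed $t$. Averaging over $t$ leaves $n+1$, and Fubini applies because everything is nonnegative or integrable.

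Next, we observe that for $c>0$ the events $\{|P(z)|>c\sqrt{n+1}\}$ and $\{X>c^2(n+1)\}$ coincide, since squaring is monotone on $[0,\infty)$. Markov's inequality then gives
\begin{equation*}
\Pr\left(|P(z)|>c\sqrt{n+1}\right)=\Pr\left(X>c^2(n+1)\right)\le \frac{\mathbb{E}[X]}{c^2(n+1)}=\frac{1}{c^2}.
\end{equation*}
This is exactly the claim.

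There is no real obstacle. The only point needing care is to apply Markov to $|P(z)|^2$ rather than to $|P(z)|$. Using $|P(z)|$ with the bound $\mathbb{E}|P(z)|\le\sqrt{n+1}$ would give only $1/c$, which is weaker. The squared version is a Chebyshev-type step and produces the stated $1/c^2$. We also note that the bound is trivial for $c\le 1$, and that the same argument holds for each fixed $z$ on the circle, since the conditional expectation is $n+1$ independently of $t$.
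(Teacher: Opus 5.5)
Your proposal is correct and follows essentially the same route as the paper: both apply Markov's inequality to the nonnegative variable $|P(z)|^2$, using $\mathbb{E}[|P(z)|^2]=n+1$ from the preceding theorem and the equality of the events $\{|P(z)|>c\sqrt{n+1}\}$ and $\{|P(z)|^2>c^2(n+1)\}$. Your justification of $\mathbb{E}[|P(z)|^2]=n+1$ via $\mathbb{E}[A_jA_k]=\delta_{jk}$ for each fixed $t$, followed by Fubini, is a cleaner version of the paper's computation.
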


\textbf{Proof:} The expected value of the modulus square of the polynomial on the unit circle is $$\mathbb{E}[|P(z)|^2]_{|z|=1} =  n+1.$$

Therefore, using  the Markov's inequality for any $c > 0$, we have

\begin{align}
\mathbb{P}(|P(z)| & > c\sqrt{n+1})\\
& \mathbb{P}(|P(z)|^2  > c^2(n+1)\\
& \leq \frac{\mathbb{E}[|P(z)|^2]}{c^2(n+1)}\\ 
&= \frac{1}{c^2}.
\end{align}

Therefore, the probability that the modulus of $P(z)$ is larger than $c\sqrt{n+1}$ is less than or equal to $\dfrac{1}{c^2}$.

 \subsection{ Average over the unit disc} 
 
 \begin{theorem}
 Let $\mathcal{P}_n$ denotes the ensemble of all polynomials of degree $n$ with random coefficients, and if $d\mu_P$ and $d\mu_z$ denote the uniform  probability measures on $\mathcal{P}_n$ and the unit circle, and  let $P(z)$,be  a random polynomial of the form:
\begin{equation} 
P(z)=\sum_{0}^{n} A_jz^j
\end{equation}
with $A_j \sim N(0,1),\, j=0,1,2,\cdots,n$. , let $H_n$ be the $nth$ partial sum of the harmonic series. Then, we have:\\
{\bf (a)}
  \begin{equation}
  \mathbb {E}_{|z|\leq 1,P \in \mathcal{P}_n }[|P(z)|^2]=\left(H_{2n+1}-\frac{1}{2}H_n\right) \approx \sqrt{log(2\sqrt{n})},
  \end{equation}    
  for large  $n.$\\
  {\bf (b)}\begin{equation}
  \mathbb {E}_{|z|\leq 1,P \in \mathcal{P}_n}[|P(z)|] \leq  \sqrt{\left(H_{2n+1}-\frac{1}{2}H_n \right)}.
  \end{equation}
 \end{theorem}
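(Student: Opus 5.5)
The plan is to reduce (a) to a one-variable radial integral, using Lemma 1 and the computation already made in the proof of Theorem 3.1, and then to get (b) from (a) by the same Cauchy--Schwarz/Jensen step used there. First I fix the interpretation of the measure on the disc. The identity $H_{2n+1}-\frac12 H_n=\sum_{j=0}^n\frac{1}{2j+1}$ (the sum of reciprocals of odd integers up to $2n+1$) shows the intended measure: write $z=re^{\iota t}$ and take $r$ uniform on $[0,1]$ and $t$ uniform on $[0,2\pi]$, independent of each other and of the coefficients. With normalized area measure $\frac{1}{\pi}r\,dr\,dt$ one would get $H_{n+1}$ instead, so I state the convention explicitly at the start.

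\textbf{Step 1 (average over $t$ at fixed $r$).} Apply Lemma 1 to the real-coefficient polynomial $P_r(w)=P(rw)=\sum_{j} A_j r^j w^j$, whose coefficients are $A_jr^j$. This gives
\begin{equation*}
|P(re^{\iota t})|^2=\sum_{j,k=0}^n A_jA_k r^{j+k}\cos(j-k)t .
\end{equation*}
Averaging over $t$ kills every term with $j\neq k$, exactly as in the proof of Theorem 3.1, leaving $\sum_{j=0}^n A_j^2 r^{2j}$.

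\textbf{Step 2 (average over the coefficients).} Use $\mathbb E(A_j^2)=1$, computed in Theorem 3.1 via Lemma 2, together with Fubini/Tonelli, which applies since everything is nonnegative. This gives $\mathbb E_{P}\,\mathbb E_t|P(re^{\iota t})|^2=\sum_{j=0}^n r^{2j}$.

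\textbf{Step 3 (average over $r$).} Integrate $\int_0^1 r^{2j}\,dr=\frac1{2j+1}$ term by term to get
\begin{equation*}
\sum_{j=0}^n\frac{1}{2j+1}=H_{2n+1}-\tfrac12 H_n ,
\end{equation*}
where the last equality comes from splitting $H_{2n+1}$ into its odd and even terms, the even ones summing to $\frac12 H_n$.

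\textbf{Asymptotics.} Insert $H_m=\log m+\gamma+O(1/m)$ to obtain
\begin{equation*}
H_{2n+1}-\tfrac12H_n=\log 2+\tfrac12\log n+\tfrac{\gamma}{2}+o(1)=\log(2\sqrt n)+\tfrac{\gamma}{2}+o(1).
\end{equation*}
So the quantity in (a) grows like $\log(2\sqrt n)$, not like $\sqrt{\log(2\sqrt n)}$. The square root belongs to the bound in (b), and I would read the ``$\approx$'' in (a) as this logarithmic asymptotic.

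\textbf{Part (b).} Apply $\mathbb E(X)\le\sqrt{\mathbb E(X^2)}$ with $X=|P(z)|$ on the product probability space of coefficients, radius and angle, and substitute (a). This gives (b) with no further work.

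The main obstacle is not computational. It is pinning down the measure on the disc so that the stated closed form is correct, and reconciling the asymptotic claim in (a) with the actual growth; everything else is a direct repetition of Theorem 3.1.
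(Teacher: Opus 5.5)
Your proposal is correct and follows the paper's proof essentially step for step. Like the paper, you apply Lemma 1 at radius $r$, average $\sum A_jA_k r^{j+k}\cos(j-k)t$ against $\frac{1}{2\pi}\,dr\,dt$ to get $\sum A_j^2/(2j+1)$, use $\mathbb E(A_j^2)=1$, split $H_{2n+1}$ into odd and even terms, and finish (b) with $\mathbb E X\le\sqrt{\mathbb E X^2}$. Both of your caveats match what the paper's own proof does: it integrates against $dr\,dt/2\pi$ rather than normalized area measure (which would give $H_{n+1}$), and it concludes $\log(2\sqrt n)$, not the $\sqrt{\log(2\sqrt n)}$ written in the statement.
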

 {\bf Proof:} Similar to our notation in the previous theorem, let $\mathcal{P}_n$ denotes the ensemble of all polynomials of degree $n$ with random coefficients, and if $d\mu'_P$ and $d\mu_z$ denote the uniform  probability measures on $\mathcal{P}_n$ and the unit disc, then we have:
    \begin{equation}
    \mathbb {E}_{|w|=1,P \in \mathcal{P}_n }[|P(z)|^2]=\int_{\mathcal{P}_n} \left( \int_{|z|=1} |P(e^{i\theta})|^2 \, d\mu'_z \right) \, d\mu'_P.
    \end{equation}
 To facilitate the evaluation of the above integral, we first of all  show with the help of Lemma 1 that the  average squared modulus  of any polynomial $p(z)|=\sum_0^n a_i z^i$ over $|z|\leq1|$ is given by
  \begin{equation*}
  \mathbb E[|p(z)|^2]_{|z| \leq 1}=\sum \frac{a_j^2}{2j+1}.
  \end{equation*}

Consider a polynomial $p(z)=\sum_0^n a_k z^k$ with real coefficients. The value of $|p(z)|^2$ over $|z|=r \quad$ is given by
$$
|p(z)|^2=\sum_{j, k=0}^n a_j a_k r^{j+k} \cos (j-k) t.
$$
Hence, the average of $|P(z)|^2$ over the ensemble of $\mathcal{P}_n$  over  the unit circle is given by
\begin{align}
\mathbb E \left[|p(z)|^2\right]=\frac{1}{2 \pi} \int_0^{2 \pi} \int_0^1 \sum_{j, k=0}^n a_j a_k r^{j+k} \cos (j-k) t \, dr\, dt =\sum_{j=0}^{n} \frac{a_j^2}{2j+1}
\end{align}
 In the light of the  equation $(3.17),$ the inner integral in equation $ (3.16)$ simplifies to 
 
 \begin{equation}
     \displaystyle\int_{|z|=1} |P(e^{i\theta})| \, d\mu'_z.
  \end{equation}
 
 For the sake of compactness denoting by $B$ the integral  $ \displaystyle\int_{\mathcal{P}_n} \left( \int_{|z|=1} |P(e^{i\theta})| \, d\mu'_z \right) \, d\mu'_P,$ it follows that the  average of the modulus over the unit disc  of the ensemble of all random polynomials of degree $n$ with coefficients distributed as standard normal variances is given by:
  \begin{equation*}
     B= \mathbb E \left(\frac{A_0^2}{1}+\frac{A_1^2}{3}+\cdots+\frac{A_n^2}{2n+1}\right).
  \end{equation*}
  In view of the independence of $A_i's,$ and the fact that $\mathbb E (A_j^2)=1$, we have:
  \begin{equation*}
  A=\left(1+\frac{1}{3}+\cdots+\frac{1}{2n+1}\right).
  \end{equation*}
  A simple algebraic reveals that the above can also be written as:
  \begin{equation*}
  B=\left(H_{2n+1}-\dfrac{1}{2}H_n \right)
  \end{equation*}
  where $H_n=\sum\dfrac{1}{n}.$
  
  For large $n,$ using asymptotic properties of $H_n,$
  \begin{equation*}
  B \sim log\left( \frac{2n+1}{\sqrt{n}}\right) \sim log\left(2  \sqrt{n}\right).
  \end{equation*}
  That establishes part (a) of the theorem. Part (b) of the theorem is an immediate consequence of the fact that for any random variable $X,$ the expectation never exceeds the square root of the expectation of its square. 

\subsection{Probabilistic Bounds on \texorpdfstring{$\displaystyle\max_{|z|=1}P(z)$}{max P(z) for |z|=1} for uniform distribution on the coefficients}

\begin{theorem}
  Let $\mathcal{P}_n$ denotes the ensemble of all polynomials of degree $n$ with  standard random coefficients, and if $d\mu_P$  denote the uniform  probability measures on $\mathcal{P}_n.$ Let $ P(z)=\displaystyle\sum_{i=0}^n A_i z^i$  be a random polynomial with independently and identically and  uniformly distributed coefficients in $[-1,1]$ and $Y=\displaystyle\max_{|z|=1}P(z).$ Then we have the following:
   \begin{equation}
  P\left(\max_{|z|=1} |P(z)| \geq c\right) \leq \dfrac{1}{c}.
  \end{equation}
\end{theorem}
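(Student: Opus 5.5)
The natural route is Markov's inequality applied to the nonnegative random variable $Y=\max_{|z|=1}|P(z)|$. This is the same tool used for Theorem 3.2, but here it is applied to the first moment of the maximum rather than to the second moment at a single point. Concretely, I would prove
\begin{equation*}
\mathbb{P}\left(\max_{|z|=1}|P(z)|\geq c\right)\leq \frac{\mathbb{E}[Y]}{c},
\end{equation*}
and then estimate $\mathbb{E}[Y]$ from above.

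The first step is the pointwise triangle inequality. For $|z|=1$,
\begin{equation*}
|P(z)|\leq \sum_{i=0}^n |A_i|\,|z|^i=\sum_{i=0}^n |A_i|,
\end{equation*}
so $Y\leq \sum_{i=0}^n |A_i|$ for every realization of the coefficients. The second step is to compute $\mathbb{E}|A_i|$ for $A_i$ uniform on $[-1,1]$: this equals $\frac12\int_{-1}^1|x|\,dx=\frac12$. Linearity of expectation (independence is not even needed) then gives $\mathbb{E}[Y]\leq \frac{n+1}{2}$. Markov's inequality consequently yields
\begin{equation*}
\mathbb{P}(Y\geq c)\leq \frac{n+1}{2c}.
\end{equation*}

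The main obstacle is removing the factor $\frac{n+1}{2}$ to reach the stated bound $1/c$, and I expect this cannot be done. By the observation in the introduction (Parseval), the maximum over the circle dominates the $L^2$ mean:
\begin{equation*}
Y\geq \left(\frac{1}{2\pi}\int_0^{2\pi}|P(e^{\iota t})|^2\,dt\right)^{1/2}=\Big(\sum_{i=0}^n A_i^2\Big)^{1/2}.
\end{equation*}
Here $\sum_{i=0}^n A_i^2$ has mean $\frac{n+1}{3}$ and concentrates around it by the law of large numbers. For fixed $c>1$ and $n\to\infty$, this forces $\mathbb{P}(Y\geq c)\to 1>1/c$. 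Before writing the argument I would therefore check this counterexample: taking $c=2$ with $n$ large already violates the inequality as stated.

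My plan is to prove the corrected, degree-dependent statement $\mathbb{P}(Y\geq c)\leq \frac{n+1}{2c}$ by the argument above. An equivalent formulation is the scale-invariant bound $\mathbb{P}\big(Y\geq c\,\tfrac{n+1}{2}\big)\leq \tfrac1c$, which is presumably what the authors intend. Optionally, one can sharpen the order of $\mathbb{E}[Y]$ to $\sqrt{n\log n}$ via the Salem--Zygmund estimate cited in the introduction, but that is not needed for a Markov-type bound.
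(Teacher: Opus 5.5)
Your proposal is correct. The inequality is false as stated for large $n$, your Parseval/law-of-large-numbers argument proves this, and your corrected bound $\mathbb{P}(Y\ge c)\le \frac{n+1}{2c}$ is proved correctly. For a concrete instance, take $n=29$ and $c=2$. Then $\sum_{i=0}^{29}A_i^2$ has mean $10$ and variance $30\cdot\frac{4}{45}=\frac{8}{3}$. Chebyshev gives $\mathbb{P}\bigl(\sum A_i^2<4\bigr)\le \frac{8/3}{36}=\frac{2}{27}$, hence $\mathbb{P}(Y\ge 2)\ge\frac{25}{27}>\frac12$.

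The paper argues in the opposite direction.
\begin{itemize}
\item It bounds the maximum from \emph{below}, $\max_{|z|=1}|P(z)|\ge |A_0|+|A_n|=:X$, by averaging $P(\omega^j z)$ over the $n$th roots of unity.
\item It computes $\mathbb{E}(X)=1$ and gets $\mathbb{P}(X\ge c)\le 1/c$ from Markov, then tries to pass to $Y$. But $Y\ge X$ only gives $\mathbb{P}(Y\ge c)\ge \mathbb{P}(X\ge c)$, which is the wrong direction for an upper tail bound.
\item The displayed step $\mathbb{P}(Y\le c)\ge \mathbb{E}(Y)/c$ is not Markov's inequality. It is false whenever $c<\mathbb{E}(Y)$, since the right side then exceeds $1$.
\item The conclusion reached, $\mathbb{P}(Y\le c)\ge 1/c$, is not the stated inequality anyway.
\end{itemize}
The paper's own Remark concedes that its argument gives no upper bound on $\mathbb{P}(\max_{|z|=1}|P(z)|\ge c)$.

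Your route, an \emph{upper} bound $Y\le\sum_i|A_i|$ followed by Markov, is the direction that can work. Your lower bound $Y\ge(\sum_i A_i^2)^{1/2}$ shows that the constant must grow with $n$, at least like $\sqrt n$, although $\frac{n+1}{2}$ itself is not sharp, as you note. Your bound equals $1/c$ exactly when $n=1$, where in fact $Y=|A_0|+|A_1|$ identically. So the theorem as stated does hold for $n\le 1$, and fails for each fixed $c>1$ once $n$ is large.
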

{\bf Proof:} In order to prove the above theorem, we use the result that for a polynomial $ p(z)=\displaystyle\sum_0^n a_i z^i,$ we have
 \begin{equation}
     \max_{|z|=1} \geq |a_0|+|a_n|.
 \end{equation}
 The proof of which can be obtained using \cite{maxbound}. However, following \cite{mathst}  it can be proved more elegantly as follows.\\
 Denoting  by $\omega $ an $nth $ root of unity, we have:
\begin{equation}
\sum_{j=0}^{n-1} {(\omega^k)}^j = \begin{cases} 
n, & k \in \{ 0, n \} \\ 
0, & 1 \le k \le n-1.
\end{cases}
  \end{equation}
Hence
\begin{align}
\frac1n \sum_{j=0}^{n-1} f(\omega^j z) 
&= \frac1n \sum_{j=0}^{n-1} \sum_{k=0}^{n} a_k {(\omega^j)}^k z^k \\
&= \frac1n \sum_{k=0}^{n} a_k z^k \sum_{j=0}^{n-1} {(\omega^k)}^j = a_0 + a_nz^n.
\end{align}

Consequently, we get
\begin{align}
|a_0| + |a_n| &= \max_{|z|=1} |a_0 + a_nz^n | \\
&=  \max_{|z|=1} \Big| \frac1n \sum_{j=0}^{n-1} p(\omega^j z) \Big| \\
&\le    \frac1n \sum_{j=0}^{n-1} \max_{|z|=1} \big| f(\omega^j z) \big|
= \max_{|z|=1} |p(z)|.
\end{align}
We now use the above proposition to establish theorem. Firstly, for the random polynomial $P(z)=\displaystyle\sum_{i=0}^n A_i z^i $, let $X = |A_0| + |A_n|$, where $A_i \sim \mathbb U(-1,1)$ are uniformly and independently distributed. 
To compute $\mathbb E[X]$, we note that $|A_0|$ and $|A_n|$ are independent and identically distributed, and each has mean $\dfrac{1}{2}$ and variance $\dfrac{1}{3}$. Therefore, we have:
\begin{align*}
\mathbb E[X] &= \mathbb E[|A_0| + |A_n|] \\
&= 2\mathbb E[|A_0|] \\
&= 2\int_0^1 x \,dx \\
&= 1.
\end{align*}
Therefore, we have:
$$
P(X \geq c) \leq \dfrac{1}{c}.
$$
Next, we have $ \displaystyle\max_{|z|=1} |P(z)| \geq |A_0| + |A_n|$, so we can apply the same argument as before to obtain:
\begin{align*}
P\left(\max_{|z|=1} |P(z)| \leq c\right) \geq  \dfrac{\mathbb E\left( \max_{|z|=1} |P(z)|\right) }{c}   \geq \dfrac{1}{c}. 
\end{align*}
Therefore, we have:
\begin{equation}
P\left(\max_{|z|=1} |P(z)| \leq c\right) \geq \dfrac{1}{c}.
\end{equation}

\parindent=0mm \vspace{.1in}
{\bf Remark:} If $A_0$ and $A_n$ are correlated, then the above lower bound may not be tight. Additionally, the above argument does not give any upper bound on $P\left(\displaystyle\max_{|z|=1} |P(z)| \geq c\right)$.
 
\subsection{Maximum Modulus Bounds}
\begin{theorem}
 Let $\mathcal{P}_n$ denotes the ensemble of all polynomials of degree $n$ with  standard random coefficients, and if $d\mu_P$  denote the uniform  probability measures on $\mathcal{P}_n.$Further,let $ P(z)=\displaystyle\sum_{i=0}^n A_i z^i$  be a random polynomial with  coefficients distributed as standard normal variates and $Y=\displaystyle\max_{|z|=1}P(z).$ Then we have the following :
   \begin{equation}
  \mathbb P\left(\displaystyle\max_{|z|=1} |P(z)| \leq c\right) \geq \dfrac{2}{c}\,\sqrt{\dfrac{2}{ \pi}}.
  \end{equation}   
\end{theorem}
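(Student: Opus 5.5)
The plan is to settle the inequality $\mathbb P(\max_{|z|=1}|P(z)|\le c)\ge \frac{2}{c}\sqrt{2/\pi}$ exactly as worded. I will prove it for all sufficiently large $c$ and show that it fails for small and intermediate $c$. The paper's preceding argument for uniform coefficients reverses Markov's inequality, writing $\mathbb P(Y\le c)\ge \mathbb E(Y)/c$. That step is not valid, so I will instead argue through the complement $\mathbb P(Y\le c)=1-\mathbb P(Y>c)$, where $Y=\max_{|z|=1}|P(z)|$.

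\emph{Step 1 (the regime where the statement holds).} By the triangle inequality, $Y\le \sum_{j=0}^n |A_j|$. Each $|A_j|$ is folded normal with mean $\sqrt{2/\pi}$; this follows by integrating $x e^{-x^2/2}$ over $(0,\infty)$, or from the density underlying Lemma 3. Hence $\mathbb E(Y)\le (n+1)\sqrt{2/\pi}$. Markov's inequality applied to $Y$ then gives
\begin{equation*}
\mathbb P(Y\le c)\ \ge\ 1-\frac{(n+1)\sqrt{2/\pi}}{c}.
\end{equation*}
The right-hand side is at least $\frac{2}{c}\sqrt{2/\pi}$ exactly when $c\ge (n+3)\sqrt{2/\pi}$. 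So the theorem holds in the range $c\ge (n+3)\sqrt{2/\pi}$. The lower bound $Y\ge |A_0|+|A_n|$ from the previous proof, which gives $\mathbb E(Y)\ge 2\sqrt{2/\pi}$, explains where the constant in the statement comes from, but that bound points the wrong way for the claimed inequality.

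\emph{Step 2 (failure for small $c$).} If $c<2\sqrt{2/\pi}$, the right-hand side exceeds $1$, while the left-hand side is a probability. The inequality is therefore false there for every $n$.

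\emph{Step 3 (failure for intermediate $c$, the main obstacle).} I will bound $\mathbb P(Y\le c)$ from above by evaluating the polynomial at a single point. Since $Y\ge |P(1)|$ and $P(1)=\sum_j A_j\sim N(0,n+1)$, we get
\begin{equation*}
\mathbb P(Y\le c)\le \mathbb P\bigl(|N(0,n+1)|\le c\bigr)\le c\sqrt{\tfrac{2}{\pi(n+1)}},
\end{equation*}
using that the normal density is at most its peak value $1/\sqrt{2\pi(n+1)}$. This upper bound is smaller than $\frac{2}{c}\sqrt{2/\pi}$ whenever $c^2<2\sqrt{n+1}$. So for $n\ge 3$ and $2\sqrt{2/\pi}\le c<\sqrt2\,(n+1)^{1/4}$, which is a nonempty range of $c$, the stated inequality fails.

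The delicate part is the gap $\sqrt2(n+1)^{1/4}\le c<(n+3)\sqrt{2/\pi}$, where neither argument decides the question. There I would first try to sharpen Step 1 using the Salem--Zygmund order $\mathbb E(Y)\asymp\sqrt{n\log n}$ together with a Gaussian concentration bound for $Y$. I would also try to sharpen Step 3 by evaluating $P$ at several well-separated roots of unity, which gives nearly independent Gaussians and hence a much smaller upper bound on $\mathbb P(Y\le c)$. The conclusion I will present is that the statement is valid only as an asymptotic-in-$c$ claim, certainly for $c\ge (n+3)\sqrt{2/\pi}$, and false as stated for all $c>0$.
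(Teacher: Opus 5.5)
Your analysis is correct. The step you flag is exactly where the paper's own proof breaks. The paper uses $\max_{|z|=1}|P(z)|\ge |A_0|+|A_n|$, obtained by averaging $P(\omega^j z)$ over the $n$-th roots of unity. It computes $\mathbb E(|A_0|+|A_n|)=2\sqrt{2/\pi}$ and then asserts, ``employing Markov inequality'', that $\mathbb P(Y\le c)\ge \mathbb E(Y)/c\ge \frac{2}{c}\sqrt{2/\pi}$.

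The first of these inequalities is not Markov's inequality, which only gives $\mathbb P(Y\ge c)\le \mathbb E(Y)/c$, and it is false in general. The statement therefore has no valid proof in the paper. Your three steps are all valid:
\begin{itemize}
\item Step 2 refutes the statement as a claim for all $c>0$, since the right-hand side exceeds $1$ when $c<2\sqrt{2/\pi}$.
\item Step 3 extends the refutation, using $Y\ge|P(1)|$ with $P(1)\sim N(0,n+1)$ and the peak of the normal density.
\item Step 1 proves the inequality for large $c$ by applying Markov to the complement with an \emph{upper} bound on $\mathbb E(Y)$. As you say, the paper's lower bound on $\mathbb E(Y)$ is useless for that.
\end{itemize}
The obvious typo-repair, $\mathbb P(Y\ge c)\le\frac{2}{c}\sqrt{2/\pi}$, is also false. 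At $c=\sqrt{n+1}$ the left side is at least $\mathbb P(|P(1)|\ge\sqrt{n+1})\approx 0.317$, which exceeds the right side once $n\ge 25$.

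Two refinements. First, the ``delicate gap'' cannot change your verdict. The map $c\mapsto\mathbb P(Y\le c)-\frac{2}{c}\sqrt{2/\pi}$ is strictly increasing, so the inequality holds exactly on a half-line $[c_n^*,\infty)$. Your steps only bracket $c_n^*$ between $\max\{2\sqrt{2/\pi},\sqrt2(n+1)^{1/4}\}$ and $(n+3)\sqrt{2/\pi}$. Incidentally, Step 3 alone works for every $c<\sqrt2(n+1)^{1/4}$, and its range beyond $2\sqrt{2/\pi}$ is nonempty already for $n\ge1$, not just $n\ge 3$.

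Second, your roots-of-unity idea does locate $c_n^*$. Take $\omega=e^{2\pi i/(n+1)}$ and $1\le k,l<(n+1)/2$. Then $\mathbb E[P(\omega^k)\overline{P(\omega^l)}]=(n+1)\delta_{kl}$ and $\mathbb E[P(\omega^k)P(\omega^l)]=0$. So these $\lfloor n/2\rfloor$ values are independent circularly symmetric complex Gaussians, and each $|P(\omega^k)|^2$ is exponential with mean $n+1$. Hence
\[
\mathbb P(Y\le c)\le\bigl(1-e^{-c^2/(n+1)}\bigr)^{\lfloor n/2\rfloor}\le\exp\bigl(-\lfloor n/2\rfloor\, e^{-c^2/(n+1)}\bigr).
\]
This is far below $\frac{2}{c}\sqrt{2/\pi}$ for all $c$ up to $(1-o(1))\sqrt{n\log n}$. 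In the other direction, feed the Salem--Zygmund-type bound $\mathbb E(Y)=O(\sqrt{n\log n})$ into your Step 1. Together these give $c_n^*\asymp\sqrt{n\log n}$.
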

 {\bf Proof:} For the random polynomial $P(z)=\displaystyle\sum_{i=0}^n A_i z^i ,A_i \sim N(0,1),$ denoting $\displaystyle\max_{|z|=1} |P(z)| $ and $ X=|A_0| + |A_n|$, we have $\displaystyle\max_{|z|=1} |P(z)| \geq |A_0| + |A_n|.$  We can compute the expected value of $X$ using lemma 3. However, we will do it directly. 
To that end , we first prove that for a folded normal variate $X$ corresponding to a standard  normal variable ,the expected value has the value:

\begin{equation}
 \mathbb E (X)=\sqrt{\dfrac{ 2}{\pi}}.
 \end{equation}

The expected value of a random variable is defined as the integral of the variable times its probability density function $(pdf).$ For a standard normal random variable $X$, the $pdf$ is given by $f(x) = \frac{1}{\sqrt{2\pi}} e^{-\frac{x^2}{2}}$.

The absolute value of $ X $ is a non-negative random variable, so we can split the expectation into two integrals over the positive and negative reals:

\begin{equation}
 \mathbb E[|X|] = \int_{-\infty}^{0} -x f(x) dx + \int_{0}^{\infty} x f(x) dx
\end{equation}

Because the standard normal distribution is symmetric, these two integrals are equal, so we can write:

\begin{equation}
 \mathbb E[|X|] = 2 \int_{0}^{\infty} x f(x) dx
\end{equation}

Substituting the $pdf$ into the integral, we get:

\begin{equation}
\mathbb E[|X|] = 2 \int_{0}^{\infty} x \frac{1}{\sqrt{2\pi}} e^{-x^2/2} dx
\end{equation}

This is a standard form of integral, and its solution is $\sqrt{2/\pi}$, so we have:

\begin{equation}
\mathbb E[|X|] = \sqrt{\dfrac{2}{\pi}}.
\end{equation}

This establishes the claim.

Hence,
  $ |A_0| ,\, |A_n|$  being folded normal variables corresponding to standard normal variables have both mean equal to $ \sqrt{\dfrac{2}{\pi}}.$ Therefore, in view of independence the expected value of $X$ is  $2\sqrt{\dfrac{2}{\pi}}$. Now, employing Markov inequality, we have 
 \begin{align*}
P\left(\max_{|z|=1} |P(z)|  \leq c\right) & \geq  \dfrac{\mathbb E\left( \max_{|z|=1} |P(z)|\right) }{c}  \\
&  \geq \dfrac{2}{c}\,\sqrt{\dfrac{2}{ \pi}}. 
\end{align*}

\section{Conclusion}
Our exploration into the average modulus of a polynomial over a circle or disc and the average modulus of a random polynomial for prescribed distributions on the coefficients has yielded significant insights into a hitherto understudied area of polynomial theory. The notion average  of a polynomial over a unit disc as a tool to characterize the behavior of a polynomial remains an under-investigated area. Our results seek to fill this research gap. Our choice of coefficients distributed as standard normal variates, a common choice in statistical modeling, has allowed us to explore a wide range of scenarios and natural phenomena, expanding our understanding of the interplay between random systems and polynomials. The resultant probabilistic model and findings have the potential to add depth to our understanding of polynomial behaviors and their applicability across numerous disciplines. Moreover, the presented result concerning the bounds of the maximum modulus has provided us with a new lens through which we can examine polynomial behavior. Utilizing the principles of probability, we derived results concerning the likelihood of the maximum modulus exceeding a specific threshold, with the Markov inequality serving as a foundational tool in our analysis.
\section{Future Work}
 The groundwork laid by this study presents several promising avenues for future research that can be pursued by the broader academic community. One potential direction is the examination of random polynomials with coefficients following non-Gaussian distributions, which could offer a richer understanding of polynomials in environments with different types of noise or uncertainty. Furthermore, the extension of our results to random polynomial systems with dependencies among coefficients would be invaluable, particularly for modeling complex networks and systems where interactions between components cannot be ignored. Another interesting avenue would be to investigate the implications of the results discussed in the paper in higher-dimensional spaces and for polynomials of multiple variables, which are of particular interest in multivariate statistical models and high-dimensional data analysis. Lastly, considering the vast applications of polynomials in various scientific and engineering disciplines, we encourage interdisciplinary research to adapt and test our theoretical findings in practical, real-world scenarios, potentially leading to novel insights and advancements in those fields.

\section*{Declarations}
{\bf Conflict of Interest:} The authors declare that there is no conflict of in interests.

\parindent=0mm \vspace{.1in}
{\bf Funding:}Not Applicable


\parindent=0mm \vspace{.2in}
\begin{thebibliography}{10}

\bibitem{rah}
Rahman, Q. I., and Schmeisser, G.  ``Analytic Theory of Polynomials: Critical Points, Zeros and Extremal Properties." Oxford University Press, 2002.

\bibitem{borp}
Borwein, P., and Erdelyi, T. ``Polynomials and polynomial inequalities." Springer,  1995.

\bibitem{Bharuch} Bharucha-Reid, A. T. and Sambandham, M. ``Random polynomials." Academic Press, Orlando, 1986.


\bibitem{mil} Milovanovic, G.V., Mitrinovic, D.S., Rassias, T., ``Topics in polynomials: extremal properties, inequalities, zeros." World Scientific Publishing Co., Singapore, 1994.

\bibitem{sheikh2023}
S.A. Sheikh, M.I. Mir, J.G. Dar, I.M. Almanjahie, and F. Alshahrani, 
``A Probabilistic Version of Eneström–Kakeya Theorem for Certain Random Polynomials,'' 
\textit{Mathematics}, vol. 11, no. 4061, 2023. 
\url{https://doi.org/10.3390/math11194061}


\bibitem{sheikh2024}
S.A. Sheikh, M.I. Mir, O.A. Alamri, J.G. Dar,
"On Variance and Average Moduli of Zeros and Critical Points of Polynomials,"
\textit{Symmetry}, vol. 16, no. 349, 2024.
\url{https://doi.org/10.3390/sym16030349}

\bibitem{neder} 
Visser, C. 
A Simple Proof of Certain Inequalities Concerning Polynomials. 
\emph{Nederl. Akad. Wetensch., Proc.}, \emph{48}, 276–281, \textbf{1945}. 

\bibitem{deewan} Chanam, B., Dewan,  K.K., Inequalities for a polynomial and its derivative, Journal of Mathematical Analysis and Applications, 336(1), (2007).



\bibitem{grigoriev2002} 
P. G. Grigoriev, 
"Estimates for Norms of Random Polynomials," 
\textit{East Journal on Approximation}, vol. 7, no. 4, pp. 445-469, 2001. 
Available at: \url{http://arxiv.org/abs/math/0210342v1}.
\bibitem{borwein2001}

P. Borwein and R. Lockhart, 
"The Expected $L_p$ Norm of Random Polynomials," 
\textit{Proceedings of the American Mathematical Society}, vol. 129, no. 5, pp. 1463-1472, May 2001. 
Available at: \url{http://www.jstor.org/stable/2668757}.
\bibitem{salzyg}
R. Salem and A. Zygmund, Some properties of trigonometric series whose terms have random signs, Acta Math. 91(1954) 245–301.

\bibitem{oppen99} Oppenheim, A. V., Schafer, R. W., \& Buck, J. R. (1999). Discrete-time signal processing (Vol. 2). Upper Saddle River, NJ: Prentice Hall.

\bibitem{kuo92} Kuo, B. C. (1992). Automatic control systems (Vol. 1). Englewood Cliffs, NJ: Prentice Hall.

\bibitem{prok} Proakis, J. G., \& Manolakis, D. G. (2001). Digital signal processing: principles, algorithms, and applications. Prentice Hall.

\bibitem{franklin} Franklin, G. F., Powell, J. D., \& Emami-Naeini, A. (1994). Feedback control of dynamic systems (Vol. 3). Reading, MA: Addison-Wesley.

\bibitem{filter} Lutovac, M. D., Tošić, D. V., \& Evans, B. L. (2000). Filter design for signal processing using MATLAB and Mathematica. Prentice Hall.
\bibitem{rebon} Rebonato, R. (2004). Volatility and correlation: The perfect hedgers and the foxes. John Wiley \& Sons.
\bibitem{glasser} Glasserman, P. (2004). Monte Carlo methods in financial engineering (Vol. 53). Springer Science \& Business Media.

\bibitem{land80} Landau, D. P., \& Binder, K. (1980). A guide to Monte Carlo simulations in statistical physics. Cambridge University Press.







\bibitem{frap} Frappier, C., Rahman, Q. I. and Rusciteweyh, S., New Inequalities for polynomials, Trans. Amer. Math. Soc. 28(1)  69-99, (1958).

\bibitem{govil} Govil, N. K. Inequalities for the derivative of a polynomial, J. Approx. Theory, 66(1), (1991), 29-35.

\bibitem{Shah} Ahanger, U.M., Shah, W.M. Inequalities for the derivative of a polynomial with restricted zeros. J Anal., 29, 1367–1374 (2021). 



\bibitem{Aziz} Aziz, A., Dawood,  Q.M, Inequalities for a polynomial and its derivative
J. Approx. Theory, 53, 155-162, (1988).

\bibitem{III} Mir, M.I., Nazir, I., Wani, I.A., On Erdös–Lax and Turán-type inequalities for polynomials, Asian-European Journal of Mathematics, 16(3), (2023).

\bibitem{maxbound}
 Dubickas, A. A Lower Bound for the Maximum of a Polynomial in the Unit Disc, Anal Math., 46, 67–76 (2020). 

\bibitem{mathst} Maximum value of a complex polynomial on the unit disk, Mathematics Stack Exchange, (2017). \href{https://math.stackexchange.com/q/2278087}{https://math.stackexchange.com/q/2278087}

\bibitem{jain-iyengar}
R. K. Jain and S. R. K. Iyengar,
\textit{Advanced Engineering Mathematics},
5th edition,
Narosa Book Distributors, 2016,
ISBN: 9788184875607.
\bibitem{Ross} Ross, S. M. ``Introduction to Probability Models." Academic Press, 2014.

\bibitem{Feller} Feller, W. ``An Introduction to Probability Theory and Its Applications." John Wiley \& Sons, 2008.
\end{thebibliography}
\end{document}